\title{Littlewood's Algorithm and Quaternion Matrices\footnotetext{This is the authors' version of a work that was published in Linear Algebra Appl. 298 (1999) 193--208.}}
\author{Dennis I. Merino\\
Department of Mathematics \\
Southeastern Louisiana University \\
Hammond, Louisiana 70402-0687 \\
dmerino@selu.edu \\
   \and
Vladimir V. Sergeichuk
\thanks{Partially supported by Grant No. UM1-314 of the
U.S. Civilian Research and Development Foundation for the
Independent States of the Former Soviet Union.}\\
Institute of Mathematics \\
Tereshchenkivska 3,
Kiev, Ukraine \\
sergeich@imath,kiev.ua}
\date{}
\begin{document}
\maketitle

\newtheorem{theorem}{Theorem}[section]
\newtheorem{lemma}{Lemma}[section]
\theoremstyle{remark}
\newtheorem{remark}{Remark}[section]

\def\Reals{\hbox{\rm I\kern-.18em R}}
\def\reals{\Reals}
\def\Complexes{\hbox{\rm C\kern-.43em
       \vrule depth 0ex height 1.4ex width .05em\kern.41em}}
\def\complexes{\Complexes}
\def\field{\hbox{\rm I\kern-.18em F}} 
\def\Naturals{\hbox{\rm I\kern-.17em N}}
\def\naturals{\Naturals}
\def\integers{\hbox{\rm Z\kern-.3em Z}}
\def\hh{\hrule height0.9pt width1.1em}
\def\vv{\vrule width0.8pt depth0.12em height0.92em}
\def\square{\vbox{\kern0.15em\hh\kern0.9em\hh\kern-1.05em
            \hbox{\vv\kern0.93em\vv}}}

\def\quat{\hbox{\rm I\kern-.17em H\kern-.17em \rm I}}
\def\poz{\hbox{\rm I\kern-.17em P}}

\def\newpic#1{%
   \def\emline##1##2##3##4##5##6{%
      \put(##1,##2){\special{em:point #1##3}}%
      \put(##4,##5){\special{em:point #1##6}}%
      \special{em:line #1##3,#1##6}}}
\newpic{}

\begin{abstract}
A strengthened form of Schur's triangularization theorem is given for
quaternion matrices with real spectrum (for complex matrices it was given
by Littlewood). It is used to classify projectors (${\cal A}^2={\cal A}$)
and self-annihilating operators (${\cal A}^2=0$) on a quaternion unitary
space and examples of unitarily wild systems
of operators on such a space are presented.
Littlewood's algorithm for reducing a complex matrix to a
canonical form under unitary similarity is extended to quaternion
matrices whose eigenvalues have geometric multiplicity 1.
\end{abstract}

\vspace{.125in}
\section{Introduction and Definitions}

We denote the set of $m$-by-$n$ matrices by $M_{m,n}(\field)$, where $\field = \complexes$
or $\field = \quat$, the skew field of real quaternions with involution
\[ \overline{a + bi + cj + dk} = a - bi - cj - dk, \mbox{  }
a, b, c, d \in \reals, \]
and write
$M_{n} \equiv M_{n,n}$;
$A^{\ast}$ denotes the conjugate transpose; the $n$-by-$n$ upper triangular Jordan block with eigenvalue $\lambda $ is denoted by
$J_{n}(\lambda)$.

A matrix $U\in M_{n}(\field )$ is called {\em unitary} if $U^{\ast}U = I$.
Two matrices $A$ and $B \in M_{n}(\field)$ are {\em unitarily similar} (over $\field $)
if there exists a unitary $U$ such that $A = U^{\ast}BU$; they are called {\em unitarily equivalent} if there exist unitary $U$ and $V$ such that $A = UBV$.

Let $A$ be a quaternion $n$-by-$n$ matrix;
$\lambda\in\quat $ is  a (right) eigenvalue of $A$ if there exists a nonzero
$v\in\quat^n$ such that $Av = v \lambda$. The eigenvalues are defined
only up to similarity: $Avh = vh \cdot h^{-1}\lambda h$ for each nonzero
$h\in\quat$, so $h^{-1}\lambda h$ is an eigenvalue of $A$ whenever
$\lambda$ is.
Every eigenvalue $\lambda=a+bi+cj+dk$ is similar to exactly one complex
number with nonnegative imaginary part, namely $a+\sqrt{b^2+c^2+d^2}i$
\cite[Lemma 2.1]{zhang}; this complex number is called a {\it standard eigenvalue} of $A$.
There exists a nonsingular $S \in M_{n}(\mbox{\quat})$ such that $ S^{-1}A S$ is a Jordan matrix
\begin{equation}
\label{eqtn1.2}
J = J_{n_{1}}(\lambda_{1}) \oplus \cdots \oplus J_{n_{k}}(\lambda_{k}), \quad
\lambda_j=a_j+b_ji\in\complexes,\ b_j\ge 0,
\end{equation}
with standard eigenvalues,
determined up to permutation of Jordan blocks \cite[Chapter 3]{jacobson}.
We will assume that
$\lambda_{1} \succeq \cdots \succeq \lambda_{k}$  with respect to the following ordering in $\complexes$:
\begin{equation}       \label{1.1}
a+bi\succeq c+di\ \ {\rm if\ either}\ \  a\ge c\ \ {\rm and}\ \ b=d,\ \ {\rm or}\ \ b>d.
\end{equation}
Performing the Gram-Schmidt orthogonalization on the columns of $S$ gives
a unitary matrix $U=ST$, where $T$ is an upper triangular matrix
with positive diagonal elements; this is
the $QR$ decomposition of $S$. Therefore, $A$ is unitarily similar to
an upper triangular matrix $U^{\ast}AU=T^{-1}JT$ having the same diagonal
as $J$ (Schur's theorem for quaternion matrices). For a survey of
quaternions and matrices of quaternions, see \cite{zhang}.
Canonical matrices of sesquilinear forms and pairs of
hermitian forms on a quaternion vector space,
and selfadjoint and isometric operators on a quaternion vector
space with indefinite scalar product, are given in \cite{sergeichuk2}.

This article is a result of attempts of the authors to extend to
quaternion matrices
Littlewood's algorithm \cite{littlewood} for reducing a complex matrix
to a canonical form under unitary similarity.
This algorithm was discussed in \cite{benedetti} and \cite{sergeichuk1};
see also \cite{sergeichuk} and the survey \cite{shapiro}.
Littlewood's algorithm is based on two statements:

(A) {\it Strengthened Schur Theorem.}
Each square complex matrix $A$ is unitarily
similar to an upper triangular matrix of the form
\begin{equation}                 \label{form2}
F=\left[\begin{array}{ccccc}
\lambda_{1}I_{n_{1}} & F_{12} & F_{13} & \cdots & F_{1s} \\
0 & \lambda_{2}I_{n_{2}} & F_{23} & \cdots & F_{2s} \\
0 & 0  & \lambda_{3}I_{n_{3}} & \ddots  & \vdots \\
\vdots & \vdots  & \vdots & \ddots & F_{s-1, s} \\
0 & 0  & 0 & \cdots & \lambda_{s}I_{n_{s}}
\end{array} \right],
\end{equation}
where $\lambda_1\succeq \cdots\succeq\lambda_s$ and
if $\lambda_i=\lambda_{i+1}$ then the columns of $F_{i,i+1}$  are
linearly independent; subject to the foregoing conditions,
the diagonal blocks $ \lambda_{i}I_{n_{i}}$ are uniquely determined by $A$.
If $F'$ is any other upper triangular matrix that is unitarily
similar to $A$ and satisfies the foregoing conditions, then
$F' = V^{\ast}FV$, where $V$ is complex unitary
and $V = V_{1} \oplus \cdots \oplus V_{s}$,
where each $V_{i} \in M_{n_{i}}(\complexes)$.
\footnote{
This formulation is not go with the algorithm: we reduce a matrix to the form (\ref{form2}), restrict the set of admissible transformations to those that preserve all diagonal and sub-diagonal blocks, then the preserving them matrices have the block-diagonal form. I propose the following version:
``where $\lambda_1\succeq \cdots\succeq\lambda_s$ and if $\lambda_i=\lambda_{i+1}$ then the columns of $F_{i,i+1}$  are linearly independent.
 The diagonal blocks $ \lambda_{i}I_{n_{i}}$ are uniquely determined by $A$. If $V^{-1}FV=F'$, where $V$ is complex unitary and $F'$ differs from $F$ only in over-diagonal blocks, then
$V = V_{1} \oplus \cdots \oplus V_{s}$,
where each $V_{i}$ is ${n_{i}}\times{n_{i}}$."
}

(B) {\it Singular value decomposition.}
Each nonzero complex matrix $A$ is unitarily equivalent
to a nonnegative diagonal matrix of the form
\begin{equation}                  \label{B}
D=a_1I_{n_{1}}\oplus\cdots\oplus a_{t-1}I_{n_{t-1}}\oplus 0,  \quad
a_i\in\reals, \ \ a_1>\cdots>a_{t-1}>0.
\end{equation}
If $U^{\ast}DV=D$, where $U$ and $V$ are complex unitary matrices,
then $U=U_1\oplus\cdots\oplus U_{t-1}\oplus U'$, $V=U_1\oplus\cdots
\oplus U_{t-1}\oplus V'$, and each $U_{i} \in M_{n_{i}}$.               \medskip

{\it Littlewood's algorithm.}
Let $A\in M_n(\complexes)$. Littlewood's algorithm has the following steps.
The first step is to reduce $A$ to the form (\ref{form2}); notice that
the diagonal blocks and all sub-diagonal blocks
of $F$ have been completely reduced.
We restrict the set
of unitary similarities to those that preserve the block structure of
(\ref{form2}),
\footnote{
 `Block structure' is not clear, the transformation must preserve all diagonal and sub-diagonal blocks.
}
 that is,
to the transformations
\begin{equation}       \label{alg}
F\mapsto V^{\ast}FV, \quad V=V_1\oplus\cdots\oplus V_s.
\end{equation}
The second step is to take the first nonzero superdiagonal block
$F_{ij}$ with respect to lexicographically
ordered indices, and reduce it to the form (\ref{B}) by unitary equivalence
$F_{ij}\mapsto V_i^{\ast}F_{ij}V_j$.  We make an additional partition of
$F$ into blocks conformal with the partition of the obtained block
$F'_{ij}=D$, and restrict the set of admissible transformations (\ref{alg})
to those that preserve $D$ (i.e., $V_i=U_1\oplus\cdots\oplus U_{t-1}\oplus U'$ and $V_j=U_1\oplus\cdots\oplus U_{t-1}\oplus V'$).
The $i$th step of the algorithm is to take the first block that changes
under admissible transformations and reduce it
by unitary similarity or equivalence to the form
(\ref{form2}) or (\ref{B}).  We restrict the set of admissible
transformations to those that preserve the reduced part and make
additional block partitions conformal to the block
that has just been
reduced. Since we have finitely many blocks, the process ends on a
certain matrix $A^{\infty}$ with the property that $A$ is unitarily
similar to $B$ if and only if $A^{\infty}=B^{\infty}$.
The matrix $A^{\infty}$ is called the {\it canonical form} of $A$ with respect to unitary similarity.

Statement (B) holds for all quaternion matrices, that is,
if $A$ is an $m$-by-$n$ quaternion matrix, then there exist
an $m$-by-$m$ unitary matrix $U$, an $n$-by-$n$ unitary matrix $V$, and
a diagonal matrix $\Sigma $ with nonnegative entries such that
$A = U \Sigma V $ \cite[Theorem 7.2]{zhang}.

In Section \ref{s2} we prove statement (A) for quaternion matrices with
real spectrum; it need not hold for quaternion matrices with nonreal
eigenvalues. The proof is based on modified Jordan matrices \cite{belitskii},
which we associate with the Weyr characteristic \cite[p. 73]{mcduffee} of a matrix.

In Section \ref{s3} we show that Littlewood's algorithm can be applied
to quaternion matrices with real spectrum that reduce
to the form (\ref{form2}) with $s=2$;
Littlewood's process then consists of at most two steps.
This two-step Littlewood's process can be used to obtain
the canonical forms of projectors (${\cal A}^2={\cal A}$) and
self-annihilating operators (${\cal A}^2=0$) on a quaternion unitary space.
A canonical form of a complex projector was given by
Dokovic  \cite{dokovic} and Ikramov  \cite{ikramov}; Ikramov's proof is based
on Littlewood's algorithm.

Unfortunately, Littlewood's algorithm cannot always be applied to
quaternion matrices with real spectrum that reduce to the
form (\ref{form2}) with $s \geq 3$.
The reason is that in the process of reduction,
one can meet a block with nonreal eigenvalues.
The problem of classifying such matrices has the same complexity as the problem of classifying all quaternion matrices up to unitary similarity since two quaternion matrices
$$
M_A=\left[\begin{array}{ccc}
3I_n & I_n & A \\
0 &2I_n & I_n  \\
0 & 0  & I_n
\end{array} \right]
\quad {\rm and} \quad
M_B=\left[\begin{array}{ccc}
3I_n & I_n & B \\
0 &2I_n & I_n  \\
0 & 0  & I_n
\end{array} \right]
$$
are unitarily similar if and only if $A$ and $B$ are unitarily similar.
That is, if $V$ is unitary and  $V^{\ast}M_AV\!$ $=M_B$, then
$V=V_1\oplus V_2\oplus V_3$
 by statement (A) for quaternion
matrices with real spectrum;
furthermore, $V_1=V_2=V_3$ and $V_1^{\ast}AV_1=B$.

Moreover, the problem of classifying quaternion matrices up to unitary similarity
(and even the problem of classifying, up to unitary
similarity, quaternion matrices with
Jordan Canonical Form $I\oplus 2I\oplus\cdots\oplus tI$)
has the same complexity as the problem of classifying an arbitrary system of linear
mappings on quaternion unitary spaces.
For example,  the problem of
classifying systems of four linear mappings
$$
\unitlength 1.0mm
\linethickness{0.4pt}
\begin{picture}(46.34,20.33)
(2,17)
\put(17.34,20.33){\makebox(0,0)[cc]{$\cal U$}}
\put(46.34,20.33){\makebox(0,0)[cc]{$\cal W$}}
\put(31.34,36.33){\makebox(0,0)[cc]{$\cal V$}}
\put(19.34,20.33){\vector(-1,0){0.2}}
\put(43.34,20.33){\line(-1,0){24.00}}
\put(31.34,22.83){\makebox(0,0)[cc]{$\cal C$}}
\put(22.84,29.33){\makebox(0,0)[cc]{$\cal B$}}
\put(40.34,29.33){\makebox(0,0)[cc]{$\cal D$}}
\put(3.17,20.33){\makebox(0,0)[cc]{$\cal A$}}
\put(14.67,19.67){\vector(3,1){0.2}}
\bezier{64}(5.00,20.33)(5.00,15.33)(14.67,19.67)
\bezier{64}(5.00,20.00)(4.67,25.00)(15.00,20.67)
\put(30.00,34.00){\vector(-1,-1){11.33}}
\put(32.00,34.00){\vector(1,-1){11.00}}
\end{picture}
$$
($\cal{U, V, W}$ are arbitrary quaternion unitary spaces)
is the canonical form problem for quaternion matrices of the form
$$
M(A,B,C,D)=\left[\begin{array}{ccccc}
5I&I&A&C&B\\  0&4I&I&0&0\\   0&0&3I&0&0\\
0&0&0&2I&D \\ 0&0&0&0&I
\end{array}\right]
$$
under unitary similarity.  Indeed, by statement (A)
 for quaternion matrices
with real spectrum, if $V$ is unitary then $V^{\ast}M(A,B,C,D)V= M(A',B',C',D')$
implies $V=V_1\oplus\cdots\oplus V_5$.  It also follows that $V_1= V_{2} =V_3$, hence $(A,B,C,D)$ and $(A',B',C',D')$ are the matrices of the same system of linear mappings $(\cal{A,B,C,D})$ in different orthogonal bases of $\cal{U, V,W}$;
compare with \cite[Sect. 2.3]{sergeichuk}.

In particular, the problem of classifying quaternion matrices up to unitary similarity is equivalent to the problem of classifying $m$-tuples of quaternion matrices up to simultaneous unitary similarity
$$
(A_1,\dots,A_m)\mapsto (V^{-1}A_1V,\dots, V^{-1}A_mV).
$$
The case for $m$-tuples of complex matrices was proved in \cite{kruglyak}.
Other examples of classification problems that have the same complexity as
classifying arbitrary systems of linear operators on unitary spaces
are given in Section \ref{s3}.

In Section \ref{s4} we prove statement (A) for {\it nonderogatory}
quaternion matri\-ces{\em{---}}those matrices all of
whose eigenvalues have geometric multiplicity 1 \cite[Section 1.4.4]{hj1}.
We then extend Littlewood's algorithm to such matrices.
We also study the structure of their canonical matrices.

\section{A Strengthened Schur Theorem for Quaternion Matrices with Real Spectrum} \label{s2}

In this section we prove the following theorem.

\begin{theorem}
\label{theorem2.3}
Let $A$ be a given square quaternion matrix and suppose that $A$ has only
real eigenvalues.

(a) Then there exists a quaternion unitary matrix $U$
such that $F \equiv U^{\ast}AU$ has the form (\ref{form2}), where $\lambda_{1} \geq \lambda_{2} \geq \cdots \geq \lambda_{s} $ are
real numbers;
when $\lambda_{i} = \lambda_{i+1}$, then $n_{i} \geq n_{i+1}$, and
$F_{i, i+1}$ is an upper triangular matrix whose diagonal entries are positive real
numbers.

(b) The diagonal blocks $F_{ii} = \lambda_{i}I_{n_{i}}$ are uniquely
determined.  The off-diagonal blocks $F_{ij}$ are determined up to the
following equivalence.
If $V$ is a quaternion unitary matrix, then
 $F' \equiv V^{\ast}FV$ has
the form (\ref{form2}) with $F'_{ii} = F_{ii}$ (and without conditions on $F'_{i,i+1}$) if and only if
$V$ has the form
\[ V = V_{1} \oplus V_{2} \oplus \cdots \oplus V_{s}, \]
where each $V_{i}$ has size $n_{i}$-by-$n_{i}$.
\end{theorem}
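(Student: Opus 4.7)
For part (a), my plan is to exploit the modified Jordan form (equivalent to the Weyr canonical form) that the authors cite. Starting from a Jordan decomposition $A = SJS^{-1}$ with $J = \bigoplus J_{n_j}(\lambda_j)$ and real $\lambda_j$, I would reorder and regroup the Jordan blocks into a matrix $W$ which already has the shape (\ref{form2}): the diagonal blocks are $\lambda_i I_{n_i}$ (with $n_i \geq n_{i+1}$ whenever $\lambda_i = \lambda_{i+1}$, this being the content of the Weyr characteristic), and the super-diagonal blocks inside an eigenvalue group are the canonical injections $\begin{bmatrix} I \\ 0 \end{bmatrix}$. Writing $A = TWT^{-1}$ with $T=SP$ for a suitable permutation $P$, I then apply a block $QR$ decomposition $T = UR$ (with $U$ quaternion unitary and $R$ block upper triangular conformally with $W$) to obtain $U^{\ast} A U = R W R^{-1}$. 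This inherits the block upper triangular shape of $W$ and has diagonal blocks $R_{ii}(\lambda_i I)R_{ii}^{-1} = \lambda_i I$, since real scalars are central in $\quat$. A final unitary change of basis inside each maximal run of equal eigenvalues puts each super-diagonal block into the required upper triangular form with positive real diagonal while keeping the diagonal blocks fixed.

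For part (b), I would write $V = (V_{ij})$ as a block matrix conformal with the partition of $F$ and expand $FV = VF'$ blockwise. For $i > j$, the relation reads
\[
(\lambda_i - \lambda_j) V_{ij} = \sum_{k < j} V_{ik} F'_{kj} - \sum_{k > i} F_{ik} V_{kj}.
\]
I would prove $V_{ij} = 0$ for every $i > j$ by a double induction: outer on the column index $j$ and inner on the row index $i$ descending from $s$ down to $j+1$. The outer hypothesis kills the first sum and the inner hypothesis kills the second, reducing the equation to $(\lambda_i - \lambda_j) V_{ij} = 0$. When $\lambda_i < \lambda_j$ this is immediate. When $\lambda_i = \lambda_j$, I would turn to the block equation at $(i-1, j)$ (or at $(j, j)$ in the edge case $i = j+1$); after applying the same two induction hypotheses, it collapses to $F_{i-1, i} V_{ij} = 0$ (respectively $F_{j, j+1} V_{j+1, j} = 0$). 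The ordering of the $\lambda_\ell$ forces $\lambda_{i-1} = \lambda_i$ in this case, so by part (a) the block $F_{i-1, i}$ is upper triangular with positive diagonal and has $n_{i-1} \geq n_i$; in particular it has full column rank, and $V_{ij} = 0$. Once $V$ is known to be block upper triangular, the unitarity $V^{\ast}V = I$ forces the remaining off-diagonal blocks to vanish, so $V = V_1 \oplus \cdots \oplus V_s$.

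The main obstacle is the equal-eigenvalue case in part (b): the naive eigenvalue-difference argument $(\lambda_i - \lambda_j)V_{ij}=0$ is vacuous, and one must bring in the full-column-rank structure of the super-diagonal blocks supplied by part (a). This is precisely why the normalization of $F_{i,i+1}$ in (a) is needed in that specific form: without it one could still pin down the diagonal blocks, but the rigid block-diagonal form of the admissible similarities would be lost.
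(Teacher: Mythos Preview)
Your proposal is correct and follows essentially the paper's approach: modified Jordan (Weyr) form plus QR for (a), and the same block-by-block elimination from $FV=VF'$---which you package as an explicit double induction---for (b). The only notable difference is that the paper uses the full (entrywise) QR factorization of the similarity matrix in (a), so that with $T^{-1}=[C_{ij}]$ and $T=[D_{ij}]$ upper triangular with positive diagonals one gets $F_{i,i+1}=C_iG_{i,i+1}D_{i+1}$ already upper triangular with positive real diagonal, making your ``final unitary change of basis'' unnecessary.
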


The matrix (\ref{form2}) is a unitary variant of a modified Jordan matrix, which was proposed by Belitski\u{\i} \cite{belitskii} and is obtained from the Jordan matrix by a simultaneous permutation of rows and columns. We define it through the Weyr characteristic of a matrix.

A list of positive integers
$(m_{1}, m_{2}, \dots , m_{k})$ is said to  be {\em decreasingly ordered} if
$m_{1} \geq m_{2} \geq \cdots \geq m_{k}$.
Given a decreasing list $(m_{1}, m_{2}, \dots , m_{k})$, its
{\em conjugate} is the decreasingly ordered list $(r_{1}, r_{2}, \dots , r_{s})$
in which $s = m_{1}$ and $r_{i}$
is the number of $m_{j}$'s larger than or equal to $i$.

The Jordan Canonical Form $J_{m_{1}}(0) \oplus J_{m_{2}}(0) \oplus \cdots \oplus
J_{m_{k}}(0)$ of a nilpotent matrix $A$ can be arranged so that the sizes of its Jordan blocks
form a decreasingly ordered list
$m \equiv (m_{1}, m_{2}, \dots , m_{k})$, which is called the {\em Segre characteristic} of $A$;
its conjugate $r \equiv (r_{1}, r_{2}, \dots , r_{s})$ is called the {\em Weyr characteristic} of
$A$ \cite[p. 73]{mcduffee}.
Notice that $\mbox{rank}(A^{l}) = r_{l+1} + \cdots + r_{s}$ for $1 \leq l < s$.

\begin{lemma}
\label{lemma2.1}
Let $A \equiv J_{m_{1}}(0) \oplus J_{m_{2}}(0) \oplus \cdots \oplus
J_{m_{k}}(0)$ be given, and
suppose that $m_{1} \geq m_{2} \geq \cdots \geq m_{k}$.
Let $(r_{1}, r_{2}, \dots , r_{s})$ be the conjugate
of $(m_{1}, m_{2}, \dots , m_{k})$.  Then $A$ is similar to
\[ B \equiv \left[
\begin{array}{ccccc}
0_{r_{1}} & G_{12} & 0 & \cdots & 0 \\
0 & 0_{r_{2}} & G_{23} & \cdots & 0 \\
0 & 0  & 0_{r_{3}} & \ddots  & \vdots \\
\vdots & \vdots  & \vdots & \ddots & G_{s-1, s} \\
0 & 0  & 0 & \cdots & 0_{r_{s}}

\end{array} \right] \]
where $G_{i, i+1} \equiv \left[ \begin{array}{c} I_{r_{i+1}} \\ 0
\end{array} \right] $ is $r_{i}$--by--$r_{i+1}$.
\end{lemma}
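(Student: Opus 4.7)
The plan is to realize the desired similarity by an explicit permutation of the standard Jordan basis. Let $e_1^{(j)},e_2^{(j)},\ldots,e_{m_j}^{(j)}$ denote the standard basis vectors of the $j$-th Jordan block, so that $Ae_1^{(j)}=0$ and $Ae_i^{(j)}=e_{i-1}^{(j)}$ for $2\le i\le m_j$. I would regroup the basis into \emph{levels}: for $i=1,\ldots,s=m_1$, level $i$ consists of the vectors $e_i^{(j)}$ for those $j$ with $m_j\ge i$, listed in increasing order of $j$. The reordered basis lists level $1$ first, then level $2$, and so on up to level $s$.

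The first thing to check is that level $i$ has exactly $r_i$ vectors and that these are indexed compatibly across levels. Since the $m_j$ are arranged in decreasing order, $\{j:m_j\ge i\}$ is the initial segment $\{1,2,\ldots,r_i\}$ by the very definition of the conjugate partition. In particular, the $j$-th vector of level $i$ is simply $e_i^{(j)}$ with $1\le j\le r_i$, and because $r_{i+1}\le r_i$ the $j$-th vector of level $i+1$ shares its index $j$ with the $j$-th vector of level $i$.

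The matrix $B$ of $A$ in the reordered basis is then read off directly. The operator $A$ never mixes different Jordan blocks, so $B$ has no cross terms in $j$; level $1$ is annihilated, yielding the top-left block $0_{r_1}$; and for $i\ge 1$ the $j$-th vector $e_{i+1}^{(j)}$ of level $i+1$ is sent to the $j$-th vector $e_i^{(j)}$ of level $i$, so the block from level $i+1$ to level $i$ is the $r_i\times r_{i+1}$ matrix $\left[\begin{array}{c} I_{r_{i+1}}\\ 0\end{array}\right]$, exactly the required $G_{i,i+1}$. All remaining blocks vanish. There is no genuine obstacle: the argument is pure bookkeeping, the only combinatorial input being the conjugate-partition identity $\{j:m_j\ge i\}=\{1,\ldots,r_i\}$, and the similarity is realized by the permutation matrix $P$ encoding the reordering, so in fact $B=P^{-1}AP$ with $P$ a real (hence quaternion) unitary.
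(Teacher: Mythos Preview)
Your proof is correct but differs from the paper's argument. The paper does not construct any similarity at all: it simply observes that
\[
\mathrm{rank}(G_{i,i+1}G_{i+1,i+2}\cdots G_{i+t,i+t+1})=\mathrm{rank}(G_{i+t,i+t+1})=r_{i+t+1},
\]
deduces $\mathrm{rank}(A^l)=\mathrm{rank}(B^l)$ for every $l$, and then invokes the fact that two nilpotent matrices with the same rank sequence are similar (equivalently, have the same Weyr characteristic, hence the same Jordan form).

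Your approach is the explicit permutation-of-basis argument: you reorder the Jordan chains by level and read off $B$ directly. This is more constructive and immediately yields that the similarity is realized by a permutation matrix (hence a real unitary), a fact the paper states only in the Remark following the lemma rather than in the proof itself. So your argument essentially supplies a proof of that Remark, whereas the paper's proof of the lemma proper is a one-line invariant check. Both are valid; yours gives more information, the paper's is shorter but relies on the classification of nilpotents by rank sequence.
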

\begin{proof}
Notice that
\[ \mbox{rank}(G_{i, i+1} G_{i+1, i+2} \cdots G_{i+t, i+t+1}) =
\mbox{rank}(G_{i+t, i+t+1}) = r_{i+t+1}. \]  One checks that
$\mbox{rank}(A^{l}) = \mbox{rank}(B^{l})$ for all $l$.  It follows that
$A$ is similar to $B$.
\end{proof}

\begin{remark} The two matrices $A$ and $B$ in Lemma {\ref{lemma2.1}} are permutation similar.
To get $B$ from $A$, permute the first columns of $J_{m_{1}}(0)$, $J_{m_{2}}(0)$,
\dots, and $J_{m_{k}}(0)$ into the first $k$ columns, then permute the
corresponding rows.  Next permute the second columns into the next columns and permute the
corresponding rows;
continue the process until $B$ is achieved.
\end{remark}

Let $A \in M_{n}(\mbox{\quat})$ be given, and let $J(A)$ be its Jordan Canonical Form (\ref{eqtn1.2}).  A repeated application of Lemma \ref{lemma2.1}
to the nilpotent part of $J(A) - \lambda_{j}I$ for each of the distinct eigenvalues $\lambda_{j}$ gives the following.

\begin{lemma}                  \label{theorem2.2}
Let $A \in M_{n}(\mbox{\quat})$ be given.
Then $A$ is similar to a unique matrix of the form
\begin{equation}
\label{form1}
B \equiv \left[
\begin{array}{ccccc}
\lambda_{1}I_{n_{1}} & G_{12} & 0 & \cdots & 0 \\
0 & \lambda_{2}I_{n_{2}} & G_{23} & \cdots & 0 \\
0 & 0  & \lambda_{3}I_{n_{3}} & \ddots  & \vdots \\
\vdots & \vdots  & \vdots & \ddots & G_{s-1, s} \\
0 & 0  & 0 & \cdots & \lambda_{s}I_{n_{s}}
\end{array} \right]
\end{equation}
with $\lambda_{1} \succeq \cdots \succeq \lambda_{s}$.
If $\lambda_{i} \not= \lambda_{i+1}$, then
$G_{i, i+1} = 0$;
otherwise, $n_{i} \geq n_{i+1}$ and
$G_{i, i+1} \equiv \left[ \begin{array}{c} I_{n_{i+1}} \\ 0
\end{array} \right] $ is $n_{i}$--by--$n_{i+1}$.
\end{lemma}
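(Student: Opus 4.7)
The plan is to deduce the lemma by applying Lemma \ref{lemma2.1} eigenvalue-by-eigenvalue to the Jordan form (\ref{eqtn1.2}). Starting from $J(A)$, I would collect the Jordan blocks sharing a common standard eigenvalue $\mu$; that subdirect-sum has the form $\mu I + N_\mu$, where $N_\mu$ is a nilpotent direct sum of Jordan blocks whose sizes, after reordering, form a decreasingly ordered list $(m_1,\dots ,m_k)$. Lemma \ref{lemma2.1} then produces a similarity carrying $N_\mu$ to the block-bidiagonal nilpotent matrix with diagonal blocks $0_{r_1},\dots ,0_{r_{s_\mu}}$ and superdiagonal blocks $G_{i,i+1}$, where $(r_1,\dots ,r_{s_\mu})$ is the Weyr characteristic of $N_\mu$. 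Adding back $\mu I$ replaces each $0_{r_i}$ with $\mu I_{r_i}$ and leaves the $G_{i,i+1}$ untouched. Performing this reduction independently on each eigenvalue component and then arranging the resulting eigenvalue groups in $\succeq$-decreasing order (inserting zero superdiagonal blocks between different eigenvalues) yields a matrix similar to $A$ of the claimed form (\ref{form1}).

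For uniqueness I would argue as follows. The multiset of standard eigenvalues $\lambda_1,\dots,\lambda_s$ appearing on the diagonal, counted with the multiplicities $n_1,\dots,n_s$, is an invariant of the similarity class: grouping the diagonal $\lambda_i I_{n_i}$ that share a common value of $\lambda_i$ gives, for each distinct eigenvalue $\mu$, a decreasing sequence of sizes $(r_1,\dots ,r_{s_\mu})$ which must equal the Weyr characteristic of the nilpotent part of $A-\mu I$ restricted to its generalized eigenspace, because Lemma \ref{lemma2.1} was set up so that $\operatorname{rank}(B^l)=r_{l+1}+\cdots+r_{s}$. Since both the spectrum and these Weyr characteristics are similarity invariants, the sequence $\lambda_1,\dots,\lambda_s$ (with the specified $\succeq$-ordering and with like eigenvalues grouped in blocks of decreasing $n_i$) and the sizes $n_1,\dots ,n_s$ are forced; the superdiagonal blocks $G_{i,i+1}$ are then prescribed by formula, so $B$ is uniquely determined by $A$.

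The only place that needs any care is checking that the $\succeq$-ordering together with the rule $n_i\ge n_{i+1}$ within an eigenvalue group produces a truly canonical placement of the blocks, so that two matrices of the form (\ref{form1}) which are similar are in fact identical. I would handle this by observing that the positions at which the eigenvalue changes are forced by $\succeq$, and that within a constant-eigenvalue run the sizes $n_i$ are strictly determined (as the Weyr conjugate of the Segre partition) and hence equal. No further computation is needed beyond Lemma \ref{lemma2.1} and the uniqueness of the Jordan form for quaternion matrices cited from \cite[Chapter 3]{jacobson}.
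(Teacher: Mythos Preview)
Your proposal is correct and follows essentially the same route as the paper: the paper's entire argument for this lemma is the single sentence preceding it, namely that one applies Lemma~\ref{lemma2.1} to the nilpotent part of $J(A)-\lambda_j I$ for each distinct standard eigenvalue $\lambda_j$. You spell out this application in more detail and additionally supply an explicit uniqueness argument (via the similarity invariance of the Weyr characteristics), which the paper leaves implicit in the uniqueness of the Jordan form.
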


Belitski\u{\i} \cite{belitskii} called the matrix (\ref{form1}) a {\it modified Jordan matrix} and proved that all
matrices commuting with $B$ have an upper block-triangular form;
this fact plays a central role in his algorithm for reducing $m$-tuples
of complex matrices to a canonical form by
simultaneous similarity.
\medskip

{\it Proof of Theorem \ref{theorem2.3}.}
(a) Let $A \in M_{n}(\mbox{\quat})$ be given and suppose that $A$ has only
real eigenvalues, say $\lambda_{1} > \cdots > \lambda_{s}$.
Lemma \ref{theorem2.2} guarantees
that $S^{-1}AS =B$ for some nonsingular matrix $S$, and $B$ has the
form (\ref{form1}).  Perform
a Gram-Schmidt orthogonalization on the columns of $S$ so that
$U=ST$ is unitary and $T$ is an upper triangular matrix with positive diagonal elements.
$T^{-1}$ is necessarily upper triangular, and its diagonal elements
are also positive.

Write
\[
T^{-1} = \left[
\begin{array}{ccccc}
C_{1} & C_{12} & C_{13} & \cdots & C_{1s} \\
0 & C_{2} & C_{23} & \cdots & C_{2s} \\
0 & 0 & C_{3} & \ddots & \vdots \\
\vdots & \vdots  & \vdots &  \ddots & C_{s-1,s} \\
0 & 0 & 0  & \cdots & C_{s} \end{array} \right] \]
and
\[
T = \left[
\begin{array}{ccccc}
D_{1} & D_{12} & D_{13} &  \cdots & D_{1s} \\
0 & D_{2} & D_{23} & \cdots & D_{2s} \\
0 & 0 & D_{3} & \ddots & \vdots \\
\vdots & \vdots  & \vdots &  \ddots & D_{s-1,s} \\
0 & 0 & 0 & \cdots & D_{s} \end{array} \right]
\]
conformal to $B$.  Since each $\lambda_{i}$ is real,
direct computation of the product
$U^{\ast}AU = T^{-1}BT$
shows that

\[
U^{\ast}AU = T^{-1}BT = \left[
\begin{array}{ccccc}
\lambda_{1}I_{n_{1}} & F_{12} & F_{13} & \cdots & F_{1s} \\
0 & \lambda_{2}I_{n_{2}} & F_{23} & \cdots & F_{2s} \\
0 & 0 & \lambda_{3}I_{n_{3}} & \ddots & \vdots \\
\vdots & \vdots  & \vdots &  \ddots & F_{s-1,s} \\
0 & 0 & 0 & \cdots & \lambda_{s}I_{n_{s}} \end{array} \right] . \]
Since all the eigenvalues are real, the off-diagonal blocks
$F_{i, i+1}$ satisfy
\[ F_{i, i+1} = \lambda_{i}C_{i}D_{i, i+1} + C_{i}G_{i, i+1}D_{i+1}
+ \lambda_{i+1}C_{i, i+1}D_{i+1} . \]
Since $T^{-1}T = I$, we have
\[ C_{i} D_{i, i+1} + C_{i, i+1}D_{i+1} = 0. \]
Hence, when $\lambda_{i} = \lambda_{i+1}$,
\[ F_{i,i+1} = C_{i}G_{i, i+1}D_{i+1} . \]
If $\lambda_{i} = \lambda_{i+1}$, Lemma \ref{theorem2.2} guarantees that
$n_{i} \geq n_{i+1}$.  Moreover, the form of $G_{i, i+1}$
shows that $F_{i, i+1}$ is an upper triangular matrix whose
diagonal entries are positive real numbers.

(b) We now prove the uniqueness part.  That the eigenvalues of $F$ and their
multiplicity are determined is clear.  The sizes $n_{i}$ are also determined
by looking at powers of $F - \lambda_{i}I$.  We can also look at a decreasingly
ordered
list of the sizes of Jordan blocks corresponding to $\lambda_{i}$ and
notice that the conjugate of this list gives us the sizes needed.

Let $V$ be unitary.  Suppose that $F' \equiv V^{\ast}FV$ has the form
(\ref{form2}) and suppose further that $F'_{ii} = F_{ii}$.
We claim that $V$ is block diagonal conformal to $F$.

Write $V = \left[ V_{ij} \right]$ conformal to $F$ (and $F'$).  Form the
products $FV = VF'$.
Suppose that $\lambda_{s} \not= \lambda_{1}$.
The $(s, 1)$ block satisfies the equation
\[ \lambda_{s}V_{s1} = \lambda_{1} V_{s1} . \]
Hence, $V_{s1} = 0$.  If $\lambda_{s-1} \not= \lambda_{1}$, we look
at the $(s-1, 1)$ block and conclude that $V_{s-1, 1} = 0$.  We proceed
until $\lambda_{j} = \lambda_{1}$.

Now, we check if $\lambda_{s} \not= \lambda_{2}$.  If so, then we look
at the $(s, 2)$ block, and proceed as before.

We conclude that $V$ is block upper triangular, but since $V$ is unitary,
$V$ is block diagonal.  Hence, it suffices to prove the claim when all the
eigenvalues are the same, say $\lambda$.

As before, we write $V = \left[ V_{ij} \right]$ conformal to $F$, and look
at the equation $FV = VF'$.

The $(s, 1)$ block satisfies $\lambda V_{s1} = \lambda V_{s1}$.
However, the $(s-1, 1)$ block satisfies the equation
\[ \lambda V_{s-1, 1} + F_{s-1,s}V_{s1} = \lambda V_{s-1, 1} . \]
Hence, $F_{s-1,s}V_{s1} = 0$.  Since $F_{s-1,s}$ is upper triangular
with positive diagonal entries, $V_{s1} = 0$.

Next, we look at the $(s-2, 1)$ block to get
\[ \lambda V_{s-2, 1} + F_{s-2,s-1}V_{s-1, 1} = \lambda V_{s-2, 1} , \]
and similarly, we conclude that $V_{s-1, 1} = 0$.  Notice that the same
argument can be used to reach the conclusion that $V_{i,1} = 0$ for all
$i = 2, ..., s$.

We then look at the $(s, 2)$ block, $(s-1, 2)$ block, and so on.
The conclusion is that
$V$ is block upper triangular.  Since $V$ is also unitary, $V$ is
in fact block diagonal and the sizes of the blocks in $V$ match those
of $F$.

\section{Applications of the Strengthened Schur Theorem}                     \label{s3}

A square matrix $A$ is called a {\em projection} or
{\em idempotent} if $A^{2} = A$; it is called
{\em self-annihilating} if $A^{2} = 0$.  A canonical form of a complex
idempotent matrix under unitary similarity was obtained in \cite{dokovic, ikramov},
see also Section 2.3 of \cite{sergeichuk}.

\begin{theorem}
\label{theorem3.1}
(a) Let $A$ be a quaternion idempotent matrix ($A^2 = A$).  Then $A$
is unitarily similar to a direct sum that is uniquely
determined up to permutation of summands of matrices of the form
\[
\left[ \begin{array}{cc} 1 & b \\ 0 & 0 \end{array} \right]
\mbox{  (b positive), }
\left[ 1 \right], \left[ 0 \right]. \]

(b) Let $A$ be a self-annihilating quaternion matrix ($A^2 = 0$).  Then $A$ is unitarily
similar to a direct sum that is uniquely determined up to permutation
of summands of matrices of the form
\[
\left[ \begin{array}{cc} 0 & b \\ 0 & 0 \end{array} \right]
\mbox{ (b positive), } \left[ 0 \right]. \]

\end{theorem}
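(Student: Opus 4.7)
The plan is to execute the two-step Littlewood process described in the introduction, which applies whenever $s\leq 2$ in Theorem \ref{theorem2.3}. Step 1 applies Theorem \ref{theorem2.3} to put $A$ in block upper triangular form; step 2 uses the quaternion singular value decomposition (statement (B), which holds for quaternions) to reduce the single superdiagonal block; a final permutation similarity rearranges the result into the claimed direct sum.

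For part (a), $A^{2}=A$ forces the eigenvalues into $\{0,1\}$, so Theorem \ref{theorem2.3}(a) supplies a unitary $U$ with
$$U^{\ast}AU=\begin{pmatrix} I_{n_{1}} & F_{12} \\ 0 & 0_{n_{2}}\end{pmatrix}$$
(with $n_{1}$ or $n_{2}$ possibly zero). By Theorem \ref{theorem2.3}(b) the remaining unitary freedom has the form $V_{1}\oplus V_{2}$, acting on $F_{12}$ by the equivalence $F_{12}\mapsto V_{1}^{\ast}F_{12}V_{2}$. The quaternion SVD produces $V_{1},V_{2}$ making $V_{1}^{\ast}F_{12}V_{2}$ rectangular diagonal with positive entries $\sigma_{1},\ldots,\sigma_{r}$ at the first $r$ diagonal positions and zeros elsewhere. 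A permutation similarity then pairs row $i$ of the upper block with column $i$ of the lower block (for $i=1,\ldots,r$), exhibiting $A$ as a direct sum of $r$ copies of $\begin{pmatrix}1 & \sigma_{i}\\0 & 0\end{pmatrix}$, together with $n_{1}-r$ copies of $[1]$ and $n_{2}-r$ copies of $[0]$.

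For part (b), all eigenvalues of $A$ vanish and the Jordan blocks of $A$ have size at most $2$, so Theorem \ref{theorem2.3}(a) yields
$$U^{\ast}AU=\begin{pmatrix} 0_{n_{1}} & F_{12}\\ 0 & 0_{n_{2}}\end{pmatrix}$$
with $n_{1}\geq n_{2}$ and $F_{12}$ upper triangular with positive diagonal, hence of full column rank. The SVD reduction under $V_{1}\oplus V_{2}$ makes $F_{12}$ rectangular diagonal with all $n_{2}$ entries $\sigma_{i}>0$, and a permutation similarity produces the required direct sum of $n_{2}$ blocks $\begin{pmatrix}0 & \sigma_{i}\\0 & 0\end{pmatrix}$ and $n_{1}-n_{2}$ copies of $[0]$.

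The most delicate part is uniqueness. The integers $n_{1}, n_{2}$ are determined by $A$ as multiplicities of the eigenvalues $1$ and $0$ in (a), or as the rank and corank of $A$ in (b); for the scalars $b$, one must argue that the multiset of positive numbers appearing in the canonical form equals the multiset of nonzero singular values of $F_{12}$, and that this multiset is an invariant of the unitary similarity class of $A$. This follows by combining the uniqueness clause of Theorem \ref{theorem2.3}(b), which pins $F_{12}$ down up to the equivalence $V_{1}^{\ast}F_{12}V_{2}$, with the uniqueness of singular values in statement (B).
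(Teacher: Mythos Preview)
Your proof is correct and follows essentially the same route as the paper: apply Theorem~\ref{theorem2.3}(a) to obtain the two-block upper triangular form, invoke Theorem~\ref{theorem2.3}(b) to see that the residual freedom is $V_{1}\oplus V_{2}$ acting on $F_{12}$ by unitary equivalence, reduce $F_{12}$ via the quaternion singular value decomposition, and finish with a permutation similarity. Your treatment of uniqueness is in fact more explicit than the paper's, which simply notes that $F_{12}$ is determined up to unitary equivalence and leaves the invariance of the singular values implicit.
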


\begin{proof}
(a) Let $A$ be a quaternion idempotent matrix.  Theorem \ref{theorem2.3}(a)
ensures that $A$ is
similar to a matrix $F$ that has the form (\ref{form2}).  Since $A^{2} = A$,
we also have
$F^{2} = F$.  Hence, $F$ must have the form
\[
F = UAU^{\ast} =
\left[ \begin{array}{cc} I & F_{12} \\ 0 & 0 \end{array} \right]
. \]
By Theorem \ref{theorem2.3}(b), $F_{12}$ is determined up to unitary equivalence.
Now let $F_{12} =  V_{1}\Sigma V_{2}^{\ast}$ be the singular value decomposition
of $F_{12}$, where $V_{1}$ and $V_{2}$
are (quaternion) unitary matrices, $\Sigma = \mbox{diag}(b_{1}, \dots, b_{l})
\oplus 0$, and $b_{1} \geq \cdots \geq b_{l} > 0$.
Take $V \equiv V_{1} \oplus V_{2}$ and notice
that
\begin{equation}
\label{eqtn3.1}
V^{\ast}FV = \left[ \begin{array}{cc} I & \Sigma \\ 0 & 0 \end{array} \right].
\end{equation}
The conclusion follows by noting that the block matrix
(\ref{eqtn3.1}) is permutation similar to a matrix that is a sum of the desired matrices.

(b) The proof is similar to that of (a) except that $A^{2} = 0$ means that
\[
F = UAU^{\ast} =
\left[ \begin{array}{cc} 0 & F_{12} \\ 0 & 0 \end{array} \right]
, \] and $\Sigma = V_{1}^{\ast}F_{12}V_{2}$ has no zero columns.
\end{proof}

A self-annihilating quaternion matrix has the Jordan Canonical Form
$J_{2}(0) \oplus \cdots \oplus J_{2}(0) \oplus 0$ and a simple canonical
form under unitary similarity, as was shown in Theorem \ref{theorem3.1}(b).
What about quaternion matrices $A$ with the
Jordan Canonical Form $J_{2}(\lambda) \oplus \cdots \oplus
J_{2}(\lambda) \oplus \lambda I_{k}$?  If $\lambda $ is real, then
$(A - \lambda I)^{2} = 0$, and hence $A$ is unitarily similar to a
direct sum of matrices of the form
\[
\left[ \begin{array}{cc} \lambda & b \\ 0 & \lambda \end{array} \right]
\mbox{ ($b$ positive), } \mbox{ and } \left[ \lambda \right]. \]
However, when $\lambda \not\in \reals$, notice that
$(A - \lambda I)^{2}$ need not equal $0$.
Part (a) of the next theorem shows that the class of such matrices is
unitarily wild, that is, it contains the problem of classifying square
complex matrices up to (complex) unitary similarity and hence (see Section 1)
it has the same complexity as the problem of classifying arbitrary systems of
linear mappings on (complex) unitary spaces.
Parts (b)--(d) for complex matrices were given in
\cite{kruglyak1, sergeichuk}

\begin{theorem}
\label{theorem3.2}
The problem of classifying each of the following classes of matrices
and pairs of matrices under unitary similarity is
unitarily wild:

(a) square quaternion matrices whose Jordan Canonical Form consists only
of Jordan blocks $J_{2}(\lambda)$ and $ \left[ \lambda \right] $, where
$\lambda \not\in \reals$ is the same for all the matrices in the class;

(b) square quaternion matrices $A$ satisfying $A^{3} = 0$;

(c) pairs of quaternion idempotent matrices $(A, B)$, even if $A$ is self-adjoint,
that is $A^{2} = A^{\ast} = A$ and $B^{2} = B$;

(d) pairs of mutually- and self-annihilating quaternion matrices $(A, B)$,
that is $AB = BA = A^{2} = B^{2} = 0$.
\end{theorem}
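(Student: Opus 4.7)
The strategy for all four parts is to exhibit an explicit embedding $X\mapsto F(X)$ (a matrix or pair of matrices in the specified class, depending on an arbitrary quaternion matrix $X\in M_n(\mathbb{H})$) such that $F(X)\sim_U F(Y)$ if and only if $X\sim_U Y$. By the discussion in Section~1 this is enough to conclude the class is unitarily wild.

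\textbf{Parts (b) and (d).} These follow from Theorem~\ref{theorem2.3}. For (b), set
\[
A(X)=\begin{pmatrix}0&I_n&X\\ 0&0&I_n\\ 0&0&0\end{pmatrix};
\]
then $A(X)^3=0$, the spectrum $\{0\}\subset\reals$ is real, and $A(X)$ is already in the form (\ref{form2}) with sub-diagonal identity blocks (upper triangular, positive diagonal). Theorem~\ref{theorem2.3}(b) forces any unitary $U$ with $U^*A(X)U=A(Y)$ to split as $U=U_1\oplus U_2\oplus U_3$; matching sub-diagonals gives $U_1=U_2=U_3=:V$, and the $(1,3)$-block yields $V^*XV=Y$. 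For (d), take $A=\bigl(\begin{smallmatrix}0&I_n\\0&0\end{smallmatrix}\bigr)$ and $B(X)=\bigl(\begin{smallmatrix}0&X\\0&0\end{smallmatrix}\bigr)$: the relations $A^2=B^2=AB=BA=0$ are immediate, Theorem~\ref{theorem2.3}(b) applied to $A$ forces $U=V\oplus V$, and then $U^*B(X)U=B(V^*XV)$.

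\textbf{Part (c).} Take the orthogonal projector $A=I_n\oplus 0_n$ and, for each $X$ with $X-I$ invertible, let $B(X)$ be the unique idempotent with range $\{(v,Xv):v\in\mathbb{H}^n\}$ and kernel $\{(w,w):w\in\mathbb{H}^n\}$; an elementary calculation gives
\[
B(X)=\begin{pmatrix}-M&M\\ -XM&XM\end{pmatrix},\qquad M=(X-I)^{-1}.
\]
Preservation of $A$ forces $U=U_1\oplus U_2$; the $(1,1)$ and $(1,2)$ blocks of $U^*B(X)U=B(Y)$ give $U_1^*MU_1=M'=U_1^*MU_2$, so invertibility of $M$ yields $U_1=U_2=:V$, and the remaining blocks collapse to $V^*XV=Y$. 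The hypothesis that $X-I$ be invertible is no real restriction, since replacing $X$ by $X+cI$ for generic $c\in\reals$ does not change unitary similarity classes.

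\textbf{Part (a).} This is the one genuinely quaternionic case, because $\lambda\not\in\reals$ does not commute with arbitrary quaternion unitaries and Theorem~\ref{theorem2.3} cannot be invoked. My plan is to set
\[
A(X)=\lambda I_{2n}+\begin{pmatrix}0&C(X)\\ 0&0\end{pmatrix},
\]
for a quaternion matrix $C(X)$ whose centralizer-of-$\lambda$ part (in the decomposition $\mathbb{H}=\mathbb{C}_\lambda\oplus j\mathbb{C}_\lambda$) is invertible---so that the JCF is $J_2(\lambda)^{\oplus n}$, placing $A(X)$ in class~(a)---and whose $j$-part encodes $X$. Blockwise analysis of $U^*A(X)U=A(Y)$ produces commutator identities $[\lambda,U_{ij}]=\cdots$; since $[\lambda,\,\cdot\,]$ annihilates $M_n(\mathbb{C}_\lambda)$-valued matrices and otherwise has image in $jM_n(\mathbb{C}_\lambda)$, invertibility of $C(X)^{(1)}$ successively forces $U_{21}=0$, then $U=U_1\oplus U_2$ with $U_i\in M_n(\mathbb{C}_\lambda)$, and finally a constraint relating $U_1,U_2$ via $C(X)$. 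The main obstacle is that a single $2\times 2$ block choice such as $C(X)=I+Xj$ only yields a unitary-congruence-type relation on the $j$-parts, which is not wild; one must enlarge the construction to a direct sum of coupled $J_2(\lambda)$-blocks so that the constraints on their various off-diagonal $j$-parts combine into the simultaneous unitary similarity of a system of complex matrices (wild by Kruglyak's theorem). Arranging this coupling while keeping the JCF inside class~(a) is where the main work lies.
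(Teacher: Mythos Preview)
Your arguments for (b), (c), (d) are correct and essentially the paper's; for (c) the paper takes $B=\left(\begin{smallmatrix} M & I-M\\ M & I-M\end{smallmatrix}\right)$, which is idempotent for \emph{every} $M$ and so avoids your invertibility hypothesis on $X-I$, but your variant works.

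Part~(a), however, is not proved. You correctly identify both the obstacle (the naive choice $C(X)=I+Xj$ yields only $U_1^{\ast}X\bar U_1=Y$, a congruence rather than a similarity) and the shape of the remedy (enlarge to several coupled $J_2(\lambda)$-blocks), but you stop at ``arranging this coupling \dots\ is where the main work lies.'' That work \emph{is} the proof, and nothing you have written guarantees it can be done while staying inside class~(a). The paper carries it out by embedding \emph{complex} $M\in M_n(\complexes)$ (this suffices, since wildness is defined via the complex problem) as
\[
A_M=\left[\begin{array}{cc}\lambda I_{4n} & X_M\\ 0 & \lambda I_{4n}\end{array}\right],\qquad
X_M=\left[\begin{array}{cccc} 4I_n & 0 & I_nj & Mj\\ 0 & 3I_n & I_nj & I_nj\\ 0 & 0 & 2I_n & 0\\ 0 & 0 & 0 & I_n\end{array}\right].
\]
The commutator analysis you sketch does give $V_{21}=0$ and $V_{11},V_{22}\in M_{4n}(\complexes)$; the point is what happens next. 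The complex part of $X_MV_{22}=V_{11}X_N$ reads $PV_{22}=V_{11}P$ with $P=4I_n\oplus 3I_n\oplus 2I_n\oplus I_n$, and the distinct real weights $4,3,2,1$ now play the role that Theorem~\ref{theorem2.3} played in (b)--(d), forcing $V_{11}=V_{22}=C_1\oplus C_2\oplus C_3\oplus C_4$. The $j$-part then yields $\bar C_3=C_1$, $\bar C_3=C_2$, $\bar C_4=C_2$ and $M\bar C_4=C_1N$, hence $C_1^{\ast}MC_1=N$ with $C_1$ complex unitary. The asymmetric placement of $Mj$ against three copies of $I_nj$ is exactly the device that converts the congruence you worried about into a genuine similarity; this is the missing idea in your sketch.
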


\begin{proof}
(a) Let $\lambda \not\in \reals$ be a given eigenvalue, which
we may assume is standard, so
$\lambda = x + yi$ with $y > 0$.  To prove (a), we exhibit a mapping
$M \mapsto A_{M} \in M_{8n}(\quat)$ such that
$M, N \in M_{n}(\complexes)$ are (complex) unitarily similar if and only if
$A_{M}$ and $A_{N}$ are (quaternion) unitarily similar.

For such a given $\lambda$ and $M \in M_{n}(\complexes)$,
we define
\[ X_{M} \equiv \left[ \begin{array}{cccc}
4I_{n} & 0 & I_{n}j & Mj \\
0 & 3I_{n} & I_{n}j & I_{n}j \\
0 & 0 & 2I_{n} & 0 \\
0 & 0 & 0 & I_{n} \end{array} \right]   \in M_{4n}(\quat) \]
and
\[ A_{M} \equiv \left[ \begin{array}{cc}
\lambda I_{4n} & X_{M} \\
0 & \lambda I_{4n} \end{array} \right] \in M_{8n}(\quat) . \]
Notice that $A_M$ is similar to a direct sum of Jordan blocks $J_{2}(\lambda)$.

Now, suppose that $M$ is unitarily similar to $N$, say $U^{\ast}MU = N$ for
some unitary $U \in M_{n}(\complexes)$.  Let $V \equiv U \oplus U \oplus
\overline{U} \oplus \overline{U}$ and notice that
\[ (V \oplus V)^{\ast} A_{M} (V \oplus V) = A_{N} \]
since $j\overline{U} = Uj$.  Hence, $A_{M}$ is unitarily similar to $A_{N}$.

Conversely, suppose that $A_{M}$ is (quaternion) unitarily similar to $A_{N}$,
that is $V^{\ast}A_{M}V = A_{N}$ for some (quaternion) unitary matrix $V$.
We claim that $M$ and $N$ are (complex) unitarily similar.  Partition
the unitary matrix $V$ conformal to $A_{M}$, and rewrite the given
condition to get
\[ \left[ \begin{array}{cc}
\lambda I_{4n} & X_{M} \\
0 & \lambda I_{4n} \end{array} \right]
\left[ \begin{array}{cc}
V_{11} & V_{12} \\
V_{21} & V_{22} \end{array} \right]
=
\left[ \begin{array}{cc}
V_{11} & V_{12} \\
V_{21} & V_{22} \end{array} \right]
\left[ \begin{array}{cc}
\lambda I_{4n} & X_{N} \\
0 & \lambda I_{4n} \end{array} \right] , \]
which yields the following equalities:
\begin{itemize}
\item[(i)] $\lambda V_{11} + X_{M}V_{21} = V_{11} \lambda $,
\item[(ii)] $\lambda V_{12} + X_{M}V_{22} = V_{11}X_{N} + V_{12} \lambda $,
\item[(iii)] $\lambda V_{21} = V_{21} \lambda $,
\item[(iv)] $\lambda V_{22} = V_{21}X_{N} + V_{22} \lambda $.
\end{itemize}
Writing $\lambda = x + yi$, and using (iii) gives $i V_{21} = V_{21} i$.
It follows that $V_{21}$ has complex entries.

From (i),
we get $y(i V_{11} - V_{11} i) = - X_{M}V_{21}$.  Write
$V_{11} = P + Qj$, where $P$ and $Q$ have complex entries,
so that the equality becomes $2yQk = -X_{M}V_{21}$.
Write $Q = \left[ Q_{ij} \right] $ and $V_{21} = \left[ A_{ij} \right] $
conformal to $X_{M}$ to get
$$
2y \left[ \begin{array}{cccc}
Q_{11} & Q_{12} & Q_{13} & Q_{14} \\
Q_{21} & Q_{22} & Q_{23} & Q_{24} \\
Q_{31} & Q_{32} & Q_{33} & Q_{34} \\
Q_{41} & Q_{42} & Q_{43} & Q_{44} \end{array} \right] k =\qquad \qquad \qquad \qquad\qquad \qquad\qquad\qquad
$$
$$
\qquad\qquad\qquad
 -\left[ \begin{array}{cccc}
4I_{n} & 0 & I_{n}j & Mj \\
0 & 3I_{n} & I_{n}j & I_{n}j \\
0 & 0 & 2I_{n} & 0 \\
0 & 0 & 0 & I_{n} \end{array} \right]
\left[ \begin{array}{cccc}
A_{11} & A_{12} & A_{13} & A_{14} \\
A_{21} & A_{22} & A_{23} & A_{24} \\
A_{31} & A_{32} & A_{33} & A_{34} \\
A_{41} & A_{42} & A_{43} & A_{44} \end{array} \right] .
$$
Since $A_{ij}$ and $Q_{ij}$ have complex entries, we must have
$A_{4j} = 0 = A_{3j}$ for $j = 1, 2, 3, 4$.  Equating the
first two rows gives $V_{21} = 0$.  Moreover, $Q = 0$ as well,
which means that $V_{11}$ has only complex entries.

Since $V$ is unitary and since $V_{21} = 0$, we must also have $V_{12} = 0$.
Moreover, (iv) reduces to $\lambda V_{22} = V_{22} \lambda$, so
$V_{22}$ has only complex entries.

Now, (ii) reduces to $X_{M}V_{22} = V_{11}X_{N}$.  Write $X_{M} = P + Q_{M}j$
and $X_{N} = P + Q_{N}j$, where $P = \mbox{diag}(4I_{n}, 3I_{n}, 2I_{n}, I_{n})$,
and $Q_{M}$ and $Q_{N}$ have complex entries.  Since $V_{11}$ and $V_{22}$
have complex entries, we have $PV_{22} = V_{11}P$.
Multiplying this equality by $V_{22}^{\ast}P = PV_{11}^{\ast}$ gives
$P^{2}\equiv (PV_{22})(V_{22}^{\ast}P) = V_{11}P^{2}V_{11}^{\ast}$.  It follows
that $V_{11}$ is block diagonal; that is, it has the form
$V_{11} = C_{1} \oplus C_{2} \oplus C_{3} \oplus C_{4}$.  Similarly,
$V_{22}$ is block diagonal, and since $PV_{22} = V_{11}P$, we must
have $V_{22} = V_{11} = C_{1} \oplus C_{2} \oplus C_{3} \oplus C_{4}$.

Equating the noncomplex part of (ii) gives the equality
$Q_{M}j V_{22} = V_{11}Q_{N}j$.  Hence, we have the following
equalities:
\[ \overline{C}_{3} = C_{1}, \mbox{ } \overline{C}_{3} = C_{2},
\mbox{ } \overline{C}_{4} = C_{2}, \mbox{ and }
M\overline{C}_{4} = C_{1}N. \]
Therefore, $MC_{1} = C_{1}N$ and $M$ is unitarily similar to $N$.

 (b) Notice that using (a),
the problem of classifying square quaternion matrices up to
unitary similarity is unitarily wild.
Hence it suffices to prove that two $n\times n$ quaternion matrices
$M$ and $N$ are unitarily similar if and only if the two
$3n\times 3n$ matrices
$$
A_M=\left[\begin{array}{ccc}
  0 & I_n & M \\ 0 & 0 & I_n \\ 0&0&0
\end{array} \right]  \ \ {\rm and} \ \
A_N=\left[ \begin{array}{ccc}
0&I_n&N \\0 &0&I_n\\0&0&0
\end{array} \right]
$$
are unitarily similar.  One checks that
$A_M^3=A_N^3=0$.

Suppose $V^{\ast}A_MV=A_N$, where $V$ is unitary.
By Theorem \ref{theorem2.3}(b), $V$ has the form  $V_1\oplus V_2\oplus V_3$.
The equality $A_{M}V = VA_{N}$ now gives $V_1=V_2=V_3$, and
thus $V_1^{\ast}MV_1=N$.

(c) We look at the pairs of quaternion idempotent matrices
     $$ \left(\left[ \begin{array}{cc}
                       I_n &  0                   \\
                       0 &  0
              \end{array} \right],
              \left[ \begin{array}{cc}
                       M & I_n-M                  \\
                       M & I_n-M
              \end{array} \right]\right)
\ \  {\rm and} \ \
        \left(\left[ \begin{array}{cc}
                       I_n &  0                   \\
                       0 &  0
              \end{array} \right],
              \left[ \begin{array}{cc}
                       N & I_n-N                  \\
                       N & I_n-N
              \end{array} \right]\right),$$
which are unitarily similar if and only if $M$ and $N$ are unitarily similar.

(d) The pairs of quaternion matrices
$$
\left(\left[ \begin{array}{cc}
              0 &  I_n \\  0 &  0
              \end{array} \right],
   \left[ \begin{array}{cc}
           0 &  M \\  0 &  0
\end{array} \right]\right) \ \ {\rm and} \ \
        \left(\left[ \begin{array}{cc}
            0 &  I_n   \\   0 &  0
              \end{array} \right],
\left[ \begin{array}{cc}
      0 &  N  \\     0 &  0
     \end{array} \right]\right)
$$
are unitarily similar if and only if $M$ and $N$ are unitarily similar.
\end{proof}


\section{Littlewood's Algorithm for Nonderogatory Matrices}
\label{s4}

A square matrix is called {\it nonderogatory}
if every (standardized) eigenvalue has geometric multiplicity 1, that is, its Jordan Canonical Form does not contain two Jordan blocks
having the same standard eigenvalue \cite[Section 1.4.4]{hj1}.
In this section, we give an algorithm for reducing a
nonderogatory quaternion matrix $A$ by unitary similarity
to a certain matrix $A^{\infty}$, which has the property
that $A$ and $B$ are unitarily similar if and only if
$A^{\infty}=B^{\infty}$.
We call such a matrix $A^{\infty}$ the {\it canonical form of $A$
with respect to unitary similarity}.

We denote by
${\cal U}({\field})\equiv \{f\in {\field}\,|\,{\bar f}=f^{-1}\}$ the set of
unitary elements of ${\field}$, where $\field$ is  ${\quat}$,
$\complexes$, or $\reals$.

\begin{theorem}      \label{theorem4.1}
(a) Each nonderogatory quaternion matrix $M$ is unitarily similar to an upper triangular matrix of the form
\begin{equation}    \label{2.1}
A=\left[ \begin{array}{cccc}
\lambda_1& a_{12} & \cdots & a_{1n} \\
&\lambda_2 & \cdots & a_{2n}  \\
&& \ddots & \vdots  \\
0&&& \lambda_n
\end{array} \right],\ \ \
      \parbox{5cm}%
{$\lambda_l=x_l+y_li\in{\complexes},\ y_l\ge 0,\\
\lambda_1\succeq\cdots\succeq\lambda_n$ ,\\
$a_{l,l+1}\notin {\complexes}j$ if $\lambda_l= \lambda_{l+1}.$}
\end{equation}

     (b) The diagonal elements of $A$ are uniquely determined.
Moreover, for every quaternion unitary matrix $S$,
the matrices $A$ and $A'=S^{\ast}AS $ have the form (\ref{2.1})
if and only if $\lambda_{1}'= \lambda_{1},\dots, \lambda_{n}'= \lambda_{n}$ and
\begin{equation}    \label{2.1'}
S={\rm diag}(s_1, \dots, s_n),\quad
s_1, \dots, s_p\in {\cal U}(\complexes),\
s_{p+1}, \dots, s_n\in {\cal U}(\mbox{\quat}),
\end{equation}
where $p$ is such that  $\lambda_1, \dots, \lambda_p \not\in {\reals}$
and $\lambda_{p+1}, \dots, \lambda_n \in {\reals}$.
\end{theorem}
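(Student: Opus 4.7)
First I would establish (a). Since $M$ is nonderogatory, its quaternion Jordan form $J = \bigoplus_{j=1}^{k} J_{n_{j}}(\lambda_{j})$ has pairwise distinct standard eigenvalues $\lambda_{j}$, which I arrange so that $\lambda_{1}\succeq\cdots\succeq\lambda_{k}$. Writing $M = SJS^{-1}$ and applying a quaternion $QR$ decomposition to $S$ gives $S = UT$ with $U$ unitary and $T$ upper triangular with positive real diagonal, so $A := U^{\ast}MU = TJT^{-1}$ is upper triangular with the desired diagonal sequence. The only non-routine claim is $a_{l,l+1} \notin \complexes j$ when $\lambda_{l} = \lambda_{l+1}$. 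Using $TT^{-1}=I$ to eliminate $(T^{-1})_{l,l+1}$, a direct expansion yields
\[
(TJT^{-1})_{l,l+1} \;=\; \frac{t_{l}}{t_{l+1}} \;+\; \frac{T_{l,l+1}\lambda_{l} - \lambda_{l}T_{l,l+1}}{t_{l+1}}.
\]
If $\lambda_{l}\in\reals$ the commutator vanishes; if $\lambda_{l} = x+yi$ with $y>0$ and $T_{l,l+1} = \alpha + \beta j$ ($\alpha,\beta\in\complexes$), then $[T_{l,l+1},\lambda_{l}] = 2y\beta k \in \complexes j$. Either way, the $\complexes$-component of $a_{l,l+1}$ is the positive real $t_{l}/t_{l+1}$, so $a_{l,l+1} \notin \complexes j$.

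For (b), uniqueness of the diagonal entries is immediate from the similarity invariance of the Jordan form, so the task is to characterise the unitaries $S$ taking one matrix of form (\ref{2.1}) to another with the same diagonal. I set up $AS = SA'$ and proceed in two stages. \emph{Stage one.} For indices $(l,m)$ lying in different Jordan blocks, a corner-to-corner induction beginning at the bottom-left entry of $S$ reduces the $(l,m)$ equation to $\lambda_{l}s_{l,m} = s_{l,m}\lambda_{m}$; distinct standard eigenvalues are not conjugate in $\quat$, so $s_{l,m}=0$, and unitarity then upgrades the resulting block-upper-triangular structure of $S$ to block-diagonal. It thus suffices to handle one Jordan block at a time.

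\emph{Stage two.} Fix a block in which every diagonal entry equals $\lambda$; I show $S_{\lambda}$ is diagonal by a column-by-column, bottom-up induction on the entries strictly below the diagonal. If $\lambda$ is real, $\lambda I$ commutes through $S_{\lambda}$ and the restricted equation becomes $\sum_{k>l}a_{l,k}s_{k,m}=\sum_{k<m}s_{l,k}a'_{k,m}$; after substituting entries already known to vanish, the surviving corner term is $a_{l,l+1}s_{l+1,m}=0$, and the hypothesis $a_{l,l+1}\notin\complexes j$ (which, since $0\in\complexes j$, implies $a_{l,l+1}\neq 0$) forces $s_{l+1,m}=0$. If $\lambda = x+yi$ is nonreal, I decompose every entry as $s_{l,m} = p_{l,m} + q_{l,m} j$ with $p_{l,m}, q_{l,m} \in \complexes$; the commutator $[\lambda,s_{l,m}] = 2yq_{l,m}k$ lies purely in $\complexes j$, so separating $\complexes$- and $\complexes j$-parts of each intertwining equation produces two scalar equations. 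Writing $a_{l,l+1}=c_{1}+c_{2}j$, the hypothesis $c_{1}\neq 0$ lets the $\complexes$-part of the corner equation force $s_{l+1,m}=0$, after which the $\complexes j$-part forces $q_{l,m}=0$, i.e.\ $s_{l,m}\in\complexes$. Propagating these vanishings and finishing with unitarity of $S_{\lambda}$ yields $S_{\lambda}=\mathrm{diag}(s_{l})$ with $s_{l,l}\in\mathcal{U}(\complexes)$ in the nonreal case and $s_{l,l}\in\mathcal{U}(\quat)$ in the real case, exactly as claimed.

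The principal obstacle is the nonreal-$\lambda$ intra-block induction in Stage two: one must keep the $\complexes$- and $\complexes j$-components of each intertwining equation synchronised and apply the hypothesis $c_{1}\neq 0$ precisely at the step where the previous induction has already forced the relevant neighbouring entry to be complex. Once this bookkeeping is in place the induction is routine, but the sweep order (column 1 before column 2, and bottom-up within each column) matters because the cancellation of mixed terms in $SA'$ relies on earlier columns having already been zeroed below the diagonal.
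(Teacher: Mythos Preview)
Your proposal is correct and follows essentially the same route as the paper. For (a) you use the $QR$ factorization of the similarity to the Jordan form and compute the superdiagonal entry explicitly as a positive real plus a commutator in $\complexes j$; the paper does the same (writing $A=T^{-1}JT$ rather than your $TJT^{-1}$, a harmless notational flip). For (b) both you and the paper first kill the off-block sub-diagonal entries via $\lambda_l s_{l,m}=s_{l,m}\lambda_m$, reduce to a single Jordan block, and then run a column-by-column, bottom-up sweep splitting into the real and nonreal cases, using $a_{l,l+1}\notin\complexes j$ exactly as you describe. One small bookkeeping point: your sweep should include the diagonal position $(m,m)$ in each column, since that is where you obtain $s_{m+1,m}=0$ (and, in the nonreal case, $s_{m,m}\in\complexes$); restricting literally to ``strictly below the diagonal'' leaves $s_{m+1,m}$ unkilled.
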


\begin{proof}
(a) The proof follows that of Theorem \ref{theorem2.3}(a),
that is, we write $S^{-1}MS = J$, where $J$ is the Jordan canonical form
of $M$; and all the eigenvalues lie in the upper half-plane.
We then apply the $QR$ factorization to $S$ to obtain
a unitary matrix $U = ST$ with $T$ an upper triangular matrix whose
diagonal elements are positive real numbers.
The matrix $A\equiv U^{\ast}MU=T^{-1}JT$ has the desired form (\ref{2.1}).  Notice
that when $\lambda_{l} = \lambda_{l+1}$, $a_{l, l+1}$ has the form
$-\lambda_{l}x + x\lambda_{l} + a$, with $a > 0$.  Now, if $\lambda \in
\complexes$ and $x \in \mbox{\quat}$, then $-\lambda x + x \lambda \in
\complexes j$.  Hence, when $\lambda_{l} = \lambda_{l+1}$, $a_{l, l+1}
\not\in \complexes j$.

(b) The proof also follows that of Theorem \ref{theorem2.3}(b),
and makes use of the techniques used in Theorem \ref{theorem3.2}(a).
First, notice that by the uniqueness of the Jordan form
and the fact that the eigenvalues are ordered we
must have $\lambda_{1}'= \lambda_{1},\dots, \lambda_{n}'= \lambda_{n}$.

Now, we show that $S$ is block diagonal.
If $\lambda_1 \not= \lambda_n$, then the $(n,1)$ entries of
$AS=SA'$ give $\lambda_ns_{n1}= s_{n1}\lambda_1$.  We express $s_{n1} = p + qj$,
with $p, q \in \complexes$ and conclude that
$s_{n1}=0$ since $\lambda_{1}$ and $\lambda_{n}$ are complex
numbers with nonnegative imaginary components.  Another
way  to look at it is that otherwise we would have
$s_{n1}^{-1}\lambda_n s_{n1}=\lambda_1$, contradicting
the fact that $\lambda_1 \not= \lambda_n$
and $\lambda_1$ and $ \lambda_n $ are complex numbers with
nonnegative imaginary components.

Next, we check if $\lambda_1 \not= \lambda_{n-1}$.
If so, then we look at the $(n-1, 1)$ entries of $AS=SA'$
to obtain $\lambda_{n-1}s_{n-1,1}= s_{n-1,1}\lambda_1$,
so that  $s_{n-1,1}=0$.  We proceed in this manner and conclude
that $s_{ij}=0$ whenever $i>j$ and $\lambda_i \not= \lambda_j$.

It follows that $S$ is upper block triangular, and since $S$ is unitary, it
must be block diagonal.  Hence, it suffices to prove that the claim holds
when all the eigenvalues coincide, that is,
$\lambda\equiv \lambda_1= \lambda_2= \cdots= \lambda_n$.

We consider two cases: $\lambda \in \reals$ and $\lambda \not\in \reals$.

Suppose $\lambda \in \reals$.
Notice that $\lambda s_{n1}= s_{n1}\lambda $ always holds.
Now, look at the $(n-1, 1)$ entries of $AS=SA'$ to obtain
$\lambda s_{n-1,1}+ a_{n-1,n}s_{n1}= s_{n-1,1}\lambda $.
Hence, $ a_{n-1,n}s_{n1}= 0$, and $ s_{n1}= 0$ since $a_{n-1,n}\ne 0$
by (\ref{2.1}). Now, look at the
$(n-2, 1)$ entries, then the $(n-3, 1)$ entries, and so on
and conclude that $s_{i1}=0$ whenever $i>1$.

Similarly, we look at the $(n,2)$ entries, the $(n-1, 2)$ entries, and so on
to conclude that in fact, $S$ is upper triangular.  Since $S$ is also
unitary, $S$ is also diagonal.

Now, suppose $\lambda \not\in \reals$. Then $\lambda= x+yi,\ y>0$;
the equality $\lambda s_{n1}= s_{n1}\lambda $ implies that
$is_{n1}= s_{n1}i$ and $s_{n1} \in \complexes$.
Furthermore, $\lambda s_{n-1,1}+ a_{n-1,n}s_{n1}= s_{n-1,1}\lambda $
implies $y(is_{n-1,1}- s_{n-1,1}i) + a_{n-1,n} s_{n1}=0$.
Write $s_{n-1,1}=p+qj$ and $a_{n-1,n}=u+vj$,
where $p, q, u,v \in \complexes$ to get
$2yqk+us_{n1}+v\bar{s}_{n1}j= us_{n1}+(v\bar{s}_{n1}+2yqi)j=0$.
Since $us_{n1}$ and $v\bar{s}_{n1}+2yqi $ are complex numbers,
and $u\not= 0$ (since $a_{n-1,n}\not\in {\complexes}j$ by (\ref{2.1})),
we must have $s_{n1}=0$ and $q=0$ (i.e., $s_{n-1,n}\in \complexes$).

Now, $\lambda s_{n-2,1}+ a_{n-2,n-1}s_{n-1,1}= s_{n-2,1} \lambda $
implies $s_{n-1,1}=0$ and $s_{n-2,1}\in \complexes$.
We repeat this process until we obtain $s_{ij}=0$ for all $i>j$
and $s_{ii}\in \complexes$. Since $S$ is unitary, $S$ is diagonal.  \end{proof}
\medskip

\noindent
\begin{center}
{\bf An algorithm for reducing a matrix $A$ of the form (\ref{2.1})
to canonical form with respect to unitary similarity}
\end{center}
\medskip

By Theorem \ref{theorem4.1}(b), the diagonal entries of $A$ are uniquely
determined.
Furthermore, all unitary similarity transformations that preserve the triangular form of $A$ and its diagonal entries have the form:
\begin{equation}       \label{2.2}
A \mapsto S^{\ast}AS,\
S\in {\cal G}_0\equiv  \underbrace{{\cal U}({\complexes})\times \cdots \times
{\cal U}({\complexes})}_{p}\times \underbrace{{\cal U}(\mbox{\quat})\times \cdots
\times {\cal U}(\mbox{\quat})}_{n-p}.
\end{equation}

We successively reduce the off-diagonal entries $a_{ij}\ (i<j)$
to a canonical form in the following order:
\begin{equation}       \label{2.3}
a_{12},\ a_{23},\dots,\ a_{n-1,n};\
a_{13},\ a_{24},\dots,\ a_{n-2,n};
\dots;\ a_{1n}.
\end{equation}
On each step, we use only those transformations (\ref{2.2})
that preserve the already reduced entries.

Suppose that all entries that precede $a_{lr}$ in the sequence (\ref{2.3})
have been reduced, and let all the transformations (\ref{2.2}) that
preserve the entries preceding $a_{lr}$ have the form
\begin{equation}       \label{2.4}
A \mapsto S^{\ast}AS,\quad
S\in {\cal G}\equiv
\{S={\rm diag}(s_1,\dots, s_n) \in{\cal G}_0\,|\, {\cal R}\},
\end{equation}
where $\cal R$ is a set of relations of the form
\begin{equation}       \label{2.5}
s_i\in{\complexes},\ s_i\in{\reals},\
s_i=s_j, \mbox{ or } s_i=s_j^{-1}\in{\complexes}.
\end{equation}
We reduce $a_{lr}$ to canonical form $a'_{lr}$
by transformations (\ref{2.4}) and show that all
transformations (\ref{2.4}) that preserve $a'_{lr}$ have the form
$A \mapsto S^{\ast}AS,\  S\in {\cal G}'=
\{S\in{\cal G}\,|\, \triangle {\cal R}\}$,
where $\triangle {\cal R}$ consists of relations of the
form (\ref{2.5}); this is required for the correctness of the induction step.

As follows from the form of relations (\ref{2.5}),
for every $i\in\{1,\dots,n\}$ there exists
${\field}_i\in \{\mbox{\quat},\, \complexes,\, \reals\}$ such that
$$
\{s_i\,|\,S\in{\cal G}\}={\cal U}({\field}_i).
$$
If $a_{lr}$ is not changed by transformations (\ref{2.4}),
we set $ a_{lr}'= a_{lr}$ and $\triangle {\cal R}=\emptyset $.

Denote by {\poz} the set of positive real numbers and
suppose that
\begin{equation}       \label{2.5'}
a_{lr}=z_1+z_2j,\quad z_1,z_2 \in \complexes,
\end{equation}
was changed by transformations (\ref{2.4}).
We have the following cases.

1) {\it ${\field}_l=\mbox{\quat}$ or ${\field}_r=\mbox{\quat}$. }
If $\cal R$ does not imply $s_l=s_r$, then we reduce $a_{lr}$
to the form $a'_{lr}=s^{-1}_la_{lr}s_r \in \mbox{\poz}$ and
obtain $\triangle {\cal R}=\{s_l=s_r\}.$
If $s_l=s_r$ follows from $\cal R$, then ${\field}_l={\field}_r=\mbox{\quat}$,
take $a_{lr}'=s^{-1}_la_{lr}s_l \in{\Complexes}$ with a nonnegative
imaginary component (note that $a_{lr}'\notin {\reals}$,
otherwise $a_{lr}$ is not changed by admissible transformations)
and obtain $\triangle {\cal R}= \{s_l\in\Complexes \}.$

2) {\it ${\field}_l={\field}_r={\Complexes}$ and
$\cal R$ does not imply $s_l=s_r$ or $s_l=s_r^{-1}$.}
Then by (\ref{2.5'})
$a_{lr}'=s_l^{-1}a_{lr}s_r= s_l^{-1}z_1s_r
+ (s_l^{-1}z_2{\bar s}_r)j = z_1s_l^{-1}s_r+ z_2s_l^{-1}s_r^{-1}j.$
If $z_1 z_{2} \not= 0$, we make $z_1' \in \mbox{\poz}$,
then $s_l=s_r$ (to preserve $z_1'$),
next make $z_2' \in \mbox{\poz}$, then  $s_l=s_r=\pm 1$;
we obtain $a_{lr}'=\mbox{\poz}1+ \mbox{\poz}j$ and $\triangle {\cal R}= \{s_l=s_r\in\Reals \}.$

If $z_1\ne 0= z_2$, we make $a_{lr}' \in \mbox{\poz}$ and obtain
$\triangle {\cal R}= \{s_l=s_r\}.$
If $z_1= 0$, then $z_2\not= 0$ (otherwise $a_{lr}$ is not
changed by admissible transformations) we make
$a_{lr}' \in \mbox{\poz}j$ and obtain $\triangle {\cal R}= \{s_l=s_r^{-1}\}.$

3) {\it ${\field}_l={\field}_r={\Complexes}$, $s_l=s_r$ or $s_l=s_r^{-1}$.}
If $s_l=s_r$, then $a_{lr}' = z_1+ z_2s_l^{-2}j,$
make $a_{lr}' \in \complexes + \mbox{\poz}j$ and obtain $\triangle {\cal R}= \{s_l\in\Reals \}.$

If $s_l=s_r^{-1}$, then $a_{lr}' = z_1 s_l^{-2}+ z_2j,$
make $a_{lr}' \in \mbox{\poz} + \complexes j$ and obtain $\triangle {\cal R}= \{s_l\in\Reals \}.$

4) {\it Either ${\field}_l={\Complexes}$ and  ${\field}_r={\Reals}$,
or ${\field}_l={\Reals}$ and  ${\field}_r={\Complexes}$.}
Make $a_{lr}'=1+z'_2j$ or $a_{lr}'=j$ and obtain
$\triangle {\cal R}= \{s_l=s_r\in \reals \}.$

5) {\it ${\field}_l={\field}_r={\Reals}$.}
Make $a_{lr}'=z_1'+z'_2j$ with
$z'_1\succ 0$ (see (\ref{1.1})), or
$a_{lr}'=z_2'j$ with $z'_2\succ 0$,
and obtain $\triangle {\cal R}= \{s_l=s_r \}.$
\medskip

The process ends with the reduction of $a_{1n}$.
We denote the matrix obtained by $A^{\infty}$;
it is the canonical form of $A$ with
respect to unitary similarity.
At each step we reduced an entry to a form that
is uniquely determined by the already reduced entries
and the class of (quaternion) unitarily similar to $A$, and so
we obtain the following theorem:

\begin{theorem}   \label{t2.1}
Two nonderogatory quaternion matrices $A$ and $B$ are
unitarily similar if and only if $A^{\infty}=B^{\infty}$.
\end{theorem}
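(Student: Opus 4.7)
The plan is to handle the two implications separately. The reverse implication is immediate: if $A^{\infty}=B^{\infty}$, then both $A$ and $B$ are unitarily similar to this common matrix, and the conclusion follows from transitivity of unitary similarity.

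For the forward direction, suppose $A$ and $B$ are nonderogatory and unitarily similar. First I would apply Theorem \ref{theorem4.1}(a) to each, reducing to form (\ref{2.1}). By Theorem \ref{theorem4.1}(b) the two resulting matrices have matching diagonals $\lambda_1,\dots,\lambda_n$ and there exists $S\in\mathcal{G}_0$ with $B=S^{\ast}AS$.

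The core of the argument is an induction along the ordering (\ref{2.3}). Let $A^{(k)}$ and $B^{(k)}$ denote the matrices obtained after $k$ steps of the algorithm are applied independently to $A$ and $B$, and let $\mathcal{G}_k$ be the admissible group (\ref{2.4}) at that stage. The inductive hypothesis is that $B^{(k)}=S_k^{\ast}A^{(k)}S_k$ for some $S_k\in\mathcal{G}_k$; the base case $k=0$ is Theorem \ref{theorem4.1}(b). At the $(k{+}1)$-st step the entries $a_{lr}$ of $A^{(k)}$ and $B^{(k)}$ lie in a common $\mathcal{G}_k$-orbit (via $S_k$). Granting that the algorithm's rule sends each to a canonical representative of its orbit, both matrices acquire the same new entry $a'_{lr}$. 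Applying Theorem \ref{theorem4.1}(b) to the new relating transformation (which necessarily fixes the diagonal, and whose conjugation preserves every already-reduced entry including $a'_{lr}$) places it in $\mathcal{G}_{k+1}$, maintaining the hypothesis. After the final step, $A^{\infty}=B^{\infty}$.

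Everything therefore reduces to verifying, for each of the five cases of the algorithm, that
(i) the prescribed $a'_{lr}$ is the unique element of the $\mathcal{G}_k$-orbit of $a_{lr}$ lying in the stated normalized set; and
(ii) the stabilizer of $a'_{lr}$ inside $\mathcal{G}_k$ is exactly $\mathcal{G}_k\cap\{S:\triangle\mathcal{R}\}$.
The main obstacle is this case analysis, especially cases 2 and 3, where one writes $a_{lr}=z_1+z_2j$ and the action by diagonal complex units transforms the two coefficients by \emph{different} characters of the pair $(s_l,s_r)\in\mathcal{U}(\complexes)^2$, namely $z_1\mapsto z_1 s_l^{-1}s_r$ and $z_2\mapsto z_2 s_l^{-1}s_r^{-1}$. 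Careful tracking of discrete sign ambiguities---for instance the residual freedom $s_l=s_r=\pm 1$ remaining after normalizing both coefficients to lie in $\poz$ when $z_1z_2\neq 0$, or the phase $s_l^{-2}$ appearing in case 3---is required to confirm that the chosen normal forms uniquely represent their orbits and that the new relations $\triangle\mathcal{R}$ cut out precisely the stabilizer.
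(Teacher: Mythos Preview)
Your proposal is correct and follows essentially the same approach as the paper. The paper's own justification is extremely terse---just the single sentence ``At each step we reduced an entry to a form that is uniquely determined by the already reduced entries and the class of (quaternion) unitarily similar to $A$'' preceding the theorem---and your write-up is a faithful unpacking of exactly that claim into an explicit induction along the ordering (\ref{2.3}), with your conditions (i) and (ii) being precisely what ``uniquely determined'' and the correctness of the stabilizer description $\triangle\mathcal{R}$ amount to.
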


For a canonical $n\times n$ matrix $A=A^{\infty}$,
its {\it graph} $\Gamma (A)$ has vertices $1,\dots, n$, and
$l$ and $r$ are jointed by an edge if and only if the relations
$s_l=s_r$ and $s_l=s_r^{-1}$ do not follow from the condition
of preserving the entries of $A$ that precede $a_{lr}$,
but one of them follows from the condition of preserving $a_{lr}$
(i.e., $s_l=s_r$ or $s_l=s_r^{-1}$ is contained in $\triangle {\cal R}$;
 see the cases 1--5).
Notice that there is an edge $(i,i+1)$ if $\lambda_i=\lambda_{i+1}$ in (\ref{2.1}) since then
$a_{i,i+1}\notin {\Complexes}j$.

A square matrix $A$ is called {\it unitarily indecomposable} if it is not
unitarily similar to a direct sum of square matrices.

\begin{theorem}    \label{t2.2}
(a) The graph of each canonical matrix is a union of trees.
Any union of trees with numbered vertices can be the graph of a canonical matrix.

(b) A canonical matrix is unitarily indecomposable if and only if its graph is a tree. Moreover, let the graph $\Gamma (A)$ of a canonical matrix $A$ be the union of $m$ trees $\Gamma_i\ (1\le i\le m)$ with the vertices $v_{i1}<v_{i2}<\cdots <v_{ir_i}$. Rearrange the columns of $A$ such that their old numbers form the sequence
$$
v_{11},\dots,v_{1r_1};\
v_{21},\dots,v_{2r_2};\,\dots\,;
v_{m1},\dots,v_{mr_m},
$$
then rearrange its rows in the same manner.
The matrix obtained has the form $A_1\oplus \cdots \oplus A_m$,
where each $A_i$ is a unitarily indecomposable canonical $r_i\times r_i$ matrix.
\end{theorem}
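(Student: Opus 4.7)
The strategy splits into proving no cycles, realizing any forest, and showing that in canonical form the cross-component entries vanish. I would argue no cycles by contradiction: suppose $\Gamma(A)$ contains a cycle $v_0, v_1, \dots, v_k = v_0$ and let $(l,r) = (v_{t-1}, v_t)$ be its edge processed last in the ordering (\ref{2.3}). The other cycle edges were processed earlier and each contributed a relation of the form $s_{v_{i-1}} = s_{v_i}^{\pm 1}$ to $\mathcal{R}$; concatenating them along the arc from $v_{t-1}$ to $v_t$ avoiding $(l,r)$ makes $s_l = s_r^{\pm 1}$ already follow from $\mathcal{R}$ by the time $a_{lr}$ is processed. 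We are then in case~1 (second scenario) or case~3 and no new edge is added --- contradicting $(l,r) \in \Gamma(A)$.

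For realization, given any forest $T$ on $\{1,\dots,n\}$ I would take $A$ with distinct real eigenvalues $\lambda_1 > \cdots > \lambda_n$ on the diagonal, $a_{lr} = 1$ for each tree edge $(l,r) \in T$ (with $l<r$), and $a_{lr} = 0$ otherwise. Distinct eigenvalues render the constraint $a_{l,l+1} \notin \complexes j$ vacuous, so $A$ already has the form (\ref{2.1}). Every zero entry is fixed by every admissible transformation and contributes no edge. At a tree edge we have ${\field}_l = {\field}_r = \mbox{\quat}$ and no previously implied relation $s_l = s_r^{\pm 1}$ (since $T$ is acyclic), so case~1 (first scenario) applies and yields $\triangle \mathcal{R} = \{s_l = s_r\}$ --- adding exactly the edge $(l,r)$.

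For (b), let $V_1, \dots, V_m$ be the vertex sets of the components $\Gamma_1, \dots, \Gamma_m$ of $\Gamma(A)$. The key claim is that $a_{lr} = 0$ whenever $l \in V_i$, $r \in V_j$ with $i \neq j$. Indeed, a nonzero $a_{lr}$ would be reduced by some one of cases 1--5, and the only cases that do \emph{not} add the edge $(l,r)$ --- case~1 (second scenario) and case~3 --- require $s_l = s_r^{\pm 1}$ to already follow from $\mathcal{R}$. But every relation in $\mathcal{R}$ is either a type restriction $s_p \in {\field}$ or is inherited from an earlier edge, so any implied equation $s_l = s_r^{\pm 1}$ would force a path of edges joining $l$ and $r$, impossible if they lie in different components. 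Hence $a_{lr} = 0$, and the described rearrangement presents $A$ as $A_1 \oplus \cdots \oplus A_m$ with each $A_i$ canonical on $V_i$ with graph $\Gamma_i$. Conversely, if $A$ is unitarily similar to a proper direct sum $B_1 \oplus B_2$, then applying the algorithm to $B_1 \oplus B_2$ leaves all cross-entries zero (a diagonal-unitary transformation preserves zeros), so the resulting canonical form --- equal to $A^\infty$ by Theorem \ref{t2.1} --- has disconnected graph. Combining these facts yields unitary indecomposability $\Leftrightarrow$ $\Gamma(A)$ is a single tree.

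The principal technical obstacle is the first half of (b): one must verify that the transitive closure of the relations collected in $\mathcal{R}$ is exactly the equivalence generated by the edges of $\Gamma(A)$, so that no lateral implication $s_l = s_r^{\pm 1}$ can leak across two different components. This requires a careful reading of cases 1--5 to confirm that every newly imposed relation of the form $s_l = s_r^{\pm 1}$ is introduced in tandem with declaring $(l,r)$ an edge, while the type relations $s_p \in {\field}$ never link distinct vertices.
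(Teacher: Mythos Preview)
Your proof is correct and follows essentially the same strategy as the paper's: a cycle-contradiction argument for acyclicity, an explicit forest realization, and the observation that canonical entries between distinct components must vanish so that a permutation exhibits $A$ as a direct sum. The only cosmetic difference is in the realization of a given forest --- you use distinct real eigenvalues (so every $\field_i=\quat$ and case~1 applies at each tree edge), whereas the paper takes $\lambda_l=(n{-}l{+}1)i$ (so every $\field_i=\complexes$ and case~2 applies); both choices make $A$ already canonical with $\Gamma(A)=T$.
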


\begin{proof} (a) Let the graph $\Gamma (A)$ of a canonical matrix $A$
have a cycle $v_1$---$v_2$---$ \;\cdots\;$---$v_p$---$v_1$ ($p\ge 2$),
and let, say, $a_{\{v_1v_2\}},\dots, a_{\{v_{p-1}v_p\}}$
precede $a_{\{v_pv_1\}}$ in the sequence (\ref{2.3}), where $a_{\{ij\}}$
denotes $a_{ij}$ if $i<j$ and $a_{ji}$ if $i>j$.
Then the equality $s_{v_p}=s_{v_1}^{\pm 1}$ follows from the
condition of preserving the entries of $A$ that precede  $a_{\{v_pv_1\}}$,
a contradiction to the existence of the edge $v_p$---$v_1$.

Let a graph $\Gamma$ with vertices $1,\dots,n$ be a joint of trees. Take $A$ of the form (\ref{2.1}), in which $\lambda_1=ni,\ \lambda_2=(n-1)i, \dots, \lambda_n=i$, and, for every $l<r$, $a_{lr}=1$ if there is the edge $l$---$r$ and $a_{lr}=0$ otherwise. Clearly, $A$ is a canonical matrix and $\Gamma(A)= \Gamma$.

(b) Let $A$ be a canonical matrix. Since $a_{ij}=0$ whenever $i$ and $j$ are not connected in $\Gamma (A)$, the graph of a unitarily indecomposable canonical matrix is a tree.
It follows from the algorithm of reduction to canonical form that
if $\Gamma (A)$ is not a connected graph, then $A$ can be reduced
to a direct sum of unitarily indecomposable canonical matrices by
simultaneous permutation of its rows and columns.
\end{proof}

\end{document}